\newtheorem{conj}{Conjecture}
\newtheorem{thm}{Theorem}[section]
\newtheorem{prop}[thm]{Proposition}
\newtheorem{rem}[thm]{Remark}
\theoremstyle{definition}
\newtheorem{prob}{Problem}
\newcommand{\onabla}{\overline\nabla}
\newcommand{\p}{\phi}
\newcommand{\Ric}{{\rm Ric}}
\newcommand{\grad}{{\rm grad}\,}
\newcommand{\ep}{\varepsilon}
\title{Biharmonic hypersurfaces\\ with bounded mean curvature}
\author{Shun Maeta} 
\thanks{Supported in part by the Grant-in-Aid for Young Scientists(B), No.15K17542, 
  Japan Society for the Promotion of Science. }
\keywords{biharmonic hypersurfaces, constant mean curvature hypersurfaces, spheres}
\subjclass[2010]{primary 53C43, secondary 58E20, 53C40}
\address{\footnotesize{Division of Mathematics, Shimane University, Nishikawatsu 1060 Matsue, 690-8504, Japan. }
 }
\email{shun.maeta@gmail.com~{\it or}~maeta@riko.shimane-u.ac.jp}
\begin{document} 
\maketitle 
\markboth{Biharmonic hypersurfaces with bounded mean curvature} 
{Shun Maeta}

\begin{abstract} 
We consider a complete biharmonic hypersurface with nowhere zero mean curvature vector field $\phi:(M^m,g)\rightarrow (S^{m+1},h)$ in a sphere. 
If the squared norm of the second fundamental form $B$ is bounded from above by m, and
$\int_M H^{- p }dv_g<\infty$, for some $0<p<\infty$,
then the mean curvature is constant.
\end{abstract}


\qquad\\


\section{Introduction}\label{intro} 

The problem of biharmonic maps was suggested in 1964 by J. Eells and J. H. Sampson (cf. \cite{jejs1}).
Biharmonic maps are generalizations of harmonic maps.
As well known, harmonic maps have been applied into various fields in differential geometry.
However there are non-existence results for harmonic maps.
Therefore a generalization of harmonic maps is an important subject.

G. Y. Jiang \cite{jg1} considered a biharmonic submanifold, and gave some examples of non-minimal biharmonic submanifolds in $S^n$ as follows:
(i) $S^{n-1}(\frac{1}{\sqrt2})$ and 
(ii) $S^{n-p}(\frac{1}{\sqrt2})\times S^{p-1}(\frac{1}{\sqrt2})$, ($n-p\not=p-1$).
\vspace{10pt}

There are many studies of biharmonic submanifolds in spheres.
Interestingly, their studies suggest the following BMO conjecture which was introduced by Balmus, Montaldo and Oniciuc (cf. \cite{21}). 

\vspace{10pt}

\begin{conj}[BMO conjecture]
Any biharmonic submanifold in spheres has constant mean curvature.
\end{conj}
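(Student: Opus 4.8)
The plan is to reduce the problem to a Liouville-type theorem for subharmonic functions, using only the normal part of the biharmonic system together with the pinching hypothesis on $|B|^2$. Write $H=|\mathbf{H}|>0$ for the mean curvature function, which is nowhere vanishing by hypothesis, hence smooth and of constant sign, with $\mathbf{H}$ the mean curvature vector and $\xi=\mathbf{H}/H$ the induced unit normal. For a biharmonic hypersurface in $S^{m+1}$, the vanishing of the normal component of the bitension field yields the scalar equation
\[
\Delta H = H\left(|B|^2-m\right),
\]
where $\Delta=\mathrm{div}\,\mathrm{grad}$ is the Laplace--Beltrami operator; the tangential component of the system will not be needed. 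Since $H>0$ and $|B|^2\le m$ by hypothesis, the right-hand side is nonpositive, so $H$ is superharmonic: $\Delta H\le 0$ on all of $M$.

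Next I would convert this into a statement about a subharmonic function lying in a good Lebesgue class. For any exponent $a<0$ a direct computation gives
\[
\Delta\!\left(H^{a}\right)=a(a-1)\,H^{a-2}\,|\nabla H|^2+a\,H^{a-1}\,\Delta H.
\]
With $a<0$ the coefficient $a(a-1)$ is positive, so the first term is nonnegative; and since $H>0$ and $\Delta H\le 0$, the second term is nonnegative as well. Hence $H^{a}$ is a nonnegative subharmonic function for every $a<0$. I now fix $a=-p/2$ and set $f=H^{-p/2}$. Then $f\ge 0$ is subharmonic, and the integrability hypothesis gives
\[
\int_M f^{2}\,dv_g=\int_M H^{-p}\,dv_g<\infty,
\]
so $f\in L^{2}(M)$.

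Finally I would invoke Yau's $L^{q}$-Liouville theorem: on a complete Riemannian manifold, every nonnegative subharmonic function belonging to $L^{q}$ for some $1<q<\infty$ is constant. Applying it to $f=H^{-p/2}\in L^{2}(M)$ on the complete manifold $(M,g)$ shows that $f$, and therefore $H$, is constant, as claimed.

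I expect the main obstacle to lie less in the analysis than in the bookkeeping that feeds it. First, one must fix the sign convention in the normal biharmonic equation so that the pinching $|B|^2\le m$ really produces superharmonicity, rather than subharmonicity, of $H$; here it is useful to test the formula against Jiang's biharmonic hypersphere $S^m(\tfrac{1}{\sqrt 2})\subset S^{m+1}$, for which $H$ is constant and $|B|^2=m$, so that the right-hand side correctly vanishes. Second, and more delicate, is the exponent trick: since the hypothesis only supplies $H^{-p}\in L^1$, one must avoid the subtle $L^1$ endpoint of the Liouville theorem, and passing instead to $f=H^{-p/2}$ exploits the fact that $H^{a}$ is subharmonic for \emph{every} negative $a$ in order to land inside the robust range $1<q<\infty$. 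Completeness of $M$ enters only through Yau's theorem, and the tangential biharmonic equation is never used.
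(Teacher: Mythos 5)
Your proposal does not prove the statement in question, because the statement is not a theorem of the paper at all: it is the BMO \emph{conjecture}, stated for an arbitrary biharmonic submanifold of a sphere, with no completeness assumption, no restriction on codimension, no positivity of the mean curvature, no pinching of $|B|^2$, and no integrability condition. The paper itself offers no proof of it and explicitly presents Theorems \ref{main lemma} and \ref{main th} only as \emph{partial} answers. Your argument quietly imports all of the extra hypotheses of Theorem \ref{main th}: you need $\mathbf{H}$ nowhere zero so that $H^{-p/2}$ is defined and smooth, $|B|^2\le m$ so that $\Delta H=H\left(|B|^2-m\right)\le 0$, completeness of $M$ for Yau's Liouville theorem, and $\int_M H^{-p}\,dv_g<\infty$ to place $f=H^{-p/2}$ in $L^2$. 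None of these appears in the conjecture, and several fail badly in its generality: the scalar equation $\Delta H=H\left(|B|^2-m\right)$ is the normal component of the bitension field \emph{for hypersurfaces only} (in higher codimension the normal equation involves $\Delta^\perp\mathbf{H}$ and extra curvature terms and does not reduce to this scalar identity), and without the pinching $|B|^2\le m$ there is no sign on $\Delta H$, so the whole subharmonicity mechanism collapses. So as a proof of the stated conjecture the proposal has a fatal gap; what it actually proves is the paper's Theorem \ref{main th}.

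Judged against the paper's proof of that theorem, your route is correct and genuinely different, and in fact tighter. The paper works in the biminimal framework, computes the drift Laplacian $\Delta_h H^{-\ep}$ with weight $h=\log H^{p-1}$, verifies finite weighted volume and $H^{-\ep}\in L^2(e^{-h}dv_g)$, and invokes the Petersen--Wylie weighted Liouville theorem; it needs $0<H\le 1$ both to make $H^{-\ep}$ bounded below and to dominate $\int_M H^{-(p-1+\ep)}\,dv_g$ by $\int_M H^{-p}\,dv_g$ (in Theorem \ref{main th} the bound $H\le 1$ comes for free from $mH^2\le|B|^2\le m$). Your argument replaces all of this bookkeeping with the observation that $H^{a}$ is subharmonic for every $a<0$ once $\Delta H\le 0$, chooses $a=-p/2$ so that the hypothesis $\int_M H^{-p}\,dv_g<\infty$ is \emph{exactly} the $L^2$ condition, and applies Yau's unweighted $L^q$ Liouville theorem in the safe range $q=2>1$. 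This dispenses with the weight, with the smallness of $\ep$, and with any use of $H\le 1$; it would even strengthen the paper's Theorem \ref{main lemma} by removing the hypothesis $H\le 1$ there (your computation gives $\Delta H\le 0$ directly from $|B|^2\le\Ric^N(\xi,\xi)$ and $\lambda\le 0$). The trade-off is that the paper's weighted machinery is the more flexible tool (it is what powers the $p$-biharmonic variant in the appendix), whereas your argument is the more economical one for the theorem as stated.
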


\vspace{10pt}

On the other hand, 
since there is no assumption of {\it completeness} for submanifolds in BMO conjecture, 
in a sense it is a problem in {\it local} differential geometry.  
The author reformulated BMO conjecture into a problem 
in {\it global} differential geometry (cf. \cite{sm13}). 

\vspace{10pt}

\begin{conj} 
Any {\rm complete} biharmonic submanifold in spheres has constant mean curvature.
\end{conj}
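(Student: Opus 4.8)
Since the conjecture is stated for arbitrary submanifolds but is most tractable in codimension one, the plan is to attack first the hypersurface case $\phi:(M^m,g)\to(S^{m+1},h)$ and then to address the extra structure of the normal bundle in higher codimension. For a hypersurface, write $\eta$ for the unit normal, $H$ for the scalar mean curvature, $A$ for the shape operator and $B$ for the second fundamental form, so that $|B|^2=|A|^2=\mathrm{tr}(A^2)$. The biharmonic condition is then equivalent to the system
\begin{equation}\label{plan:normal}
\Delta H = H\,(|B|^2-m),
\end{equation}
\begin{equation}\label{plan:tang}
A(\nabla H) = -\tfrac{m}{2}\,H\,\nabla H,
\end{equation}
where $\Delta=\mathrm{div}\,\nabla$ is the Laplace--Beltrami operator. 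The goal is to prove $\nabla H\equiv0$.

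First I would extract the algebraic information in \eqref{plan:tang}. On the open set $\Omega=\{x:\nabla H(x)\neq0\}$ it forces $-\tfrac{m}{2}H$ to be an eigenvalue of $A$, so $\tfrac{m^2}{4}H^2\le|A|^2=|B|^2$; thus any pointwise bound on $|B|^2$ already bounds $H$ on $\Omega$. Next I would integrate the normal equation \eqref{plan:normal}. In the \emph{compact} case this settles the matter at once: since $M$ is closed, $\int_M\Delta H\,dv_g=0$, so $\int_M H(|B|^2-m)\,dv_g=0$; if $H(|B|^2-m)$ has one sign (which a bound such as $|B|^2\le m$ together with $H>0$ guarantees), the integrand vanishes identically, forcing $|B|^2\equiv m$ on $M$, whence \eqref{plan:normal} gives $\Delta H\equiv0$ and $H$ is constant. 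Hence the real content of the conjecture lies entirely in the \emph{complete noncompact} case.

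For complete noncompact $M$ the identity $\int_M\Delta H\,dv_g=0$ fails, so I would replace it by a cutoff argument. Choosing Lipschitz cutoffs $\psi_R$ that equal $1$ on the geodesic ball of radius $R$ about a fixed point, vanish outside the ball of radius $2R$, and satisfy $|\nabla\psi_R|\le C/R$, I would test \eqref{plan:normal} against $\psi_R^2 H^{\,q}$ for a suitable exponent $q$ and integrate by parts, using \eqref{plan:tang} to control $|\nabla H|^2$ by $H\,|\nabla H|\,|A|$. This produces a Caccioppoli-type inequality in which the only obstruction to passing to the limit $R\to\infty$ is a boundary term of the form $\int\psi_R|\nabla\psi_R|\,(\cdots)\,dv_g$. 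If one can show this boundary term tends to $0$, a Liouville/Bochner scheme (in the spirit of Yau's $L^p$-Liouville theorems and the Omori--Yau maximum principle) forces $\int_M\psi_R^2|\nabla H|^2\,dv_g\to0$, hence $\nabla H\equiv0$ and $H$ constant.

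The hard part---and the reason the conjecture is still open---is precisely that neither of the two ingredients above can be supplied unconditionally. Without a sign for $|B|^2-m$, the integrated normal equation carries no information even on a compact manifold, so the mean curvature need not be forced to any value; and without decay or integrability of $H$ at infinity, the boundary term in the Caccioppoli estimate cannot be shown to vanish, so a complete noncompact $M$ might in principle carry a biharmonic hypersurface whose mean curvature oscillates or degenerates near infinity. These are exactly the two gaps that the present paper closes by imposing $|B|^2\le m$ (which fixes the sign in \eqref{plan:normal} and, via the eigenvalue bound above, keeps $H$ bounded) and $\int_M H^{-p}\,dv_g<\infty$ (which furnishes the integrability needed to kill the boundary term). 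A proof of the full unconditional conjecture would have to remove both crutches, and I expect this---rather than any single computation---to be the decisive obstacle.
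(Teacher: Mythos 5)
The statement you were asked to prove is the paper's second Conjecture --- an open problem --- and the paper itself contains no proof of it: it only establishes conditional partial answers (Theorem 1.2 for non-positive biminimal hypersurfaces with $|B|^2\le \mathrm{Ric}^N(\xi,\xi)$, and Theorem 1.3 for biharmonic hypersurfaces in $S^{m+1}$ with $|B|^2\le m$ and $\int_M H^{-p}\,dv_g<\infty$). So there is no proof in the paper to measure your argument against, and your closing paragraph correctly diagnoses the situation: any genuine proof would have to remove both the sign hypothesis on $|B|^2-m$ and the integrability hypothesis at infinity, and your sketch does not (and could not be expected to) do so. Your preliminary reductions are sound: in $S^{m+1}$ the biharmonic system does reduce to $\Delta H=H(|B|^2-m)$ and $A(\nabla H)=-\tfrac{m}{2}H\,\nabla H$ (the paper's equations with $\mathrm{Ric}^N(\xi,\xi)=m$ and $(\mathrm{Ric}^N(\xi))^T=0$), the eigenvalue bound $\tfrac{m^2}{4}H^2\le|B|^2$ on $\{\nabla H\neq0\}$ is correct, and your compact-case integration argument recovers the known result cited as item (i) in the introduction. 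In short: your submission is an honest assessment of an open conjecture plus a strategy sketch, not a proof, and no proof was available to be reconstructed.

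It is still worth comparing your conditional strategy with the paper's actual technique, since they differ. You propose a bare-hands cutoff/Caccioppoli scheme: test the normal equation against $\psi_R^2 H^{q}$ and kill the boundary term using $\int_M H^{-p}\,dv_g<\infty$. The paper instead packages the limiting argument into Petersen--Wylie's weighted (Yau--Naber) Liouville theorem: it sets $h=\log H^{p-1}$ and shows that $u=H^{-\varepsilon}$ satisfies $\Delta_h u\ge 0$, the weight being chosen precisely so that the gradient terms combine into $\varepsilon(\varepsilon+p)H^{-(\varepsilon+2)}|\nabla H|^2\ge 0$, while $|B|^2\le\mathrm{Ric}^N(\xi,\xi)$ and $\lambda\le 0$ give the sign of the zeroth-order terms; the hypotheses $0<H\le 1$ and $\int_M H^{-p}\,dv_g<\infty$ then yield finite $e^{-h}$-volume and $u\in L^2(e^{-h}dv_g)$, so $u$, hence $H$, is constant. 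The two routes are morally the same (the weighted Liouville theorem is itself proved by cutoff arguments), but the weighted formulation spares one from choosing $q$ and estimating error terms by hand, and it reveals the one idea missing from your sketch: one must work with the \emph{negative} power $H^{-\varepsilon}$, which is a bounded-below $\Delta_h$-subharmonic function exactly when $|B|^2\le\mathrm{Ric}^N(\xi,\xi)$, rather than with $H$ itself or a positive power $H^{q}$, for which the hypothesis $\int_M H^{-p}\,dv_g<\infty$ gives the boundary term the wrong integrability. Notably, the paper's argument never uses the tangential equation $A(\nabla H)=-\tfrac{m}{2}H\nabla H$ at all, whereas your outline leans on it to control $|\nabla H|^2$; in the paper's scheme the good gradient term comes for free from differentiating $H^{-\varepsilon}$ twice.
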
 

\begin{rem}
Interestingly, Z.-P. Wang and Y.-L. Ou treated a biharmonic Riemannian submersion from a sphere and got non-existence results $($cf.~\cite{zpwylo1}$)$.
\end{rem}

There are affirmative partial answers to BMO conjecture, if $M$ is one of the following:

(i) A compact hypersurface with nowhere zero mean curvature vector field and $|B|^2\geq m$ or $|B|^2\leq m$, where $|B|^2$ is the squared norm of the second fundamental form (cf. \cite{48},\  \cite{16}).
 
(ii) An orientable Dupin hypersurface (cf. \cite{16}).

(iii) A compact submanifold with $|{\bf H}|\geq1$ (cf. \cite{23},~see also \cite{sm13}).

(iv) A complete submanifold with $|{\bf H}|\geq1$ and the Ricci curvature of $M$ is bounded from below (cf. \cite{sm13}).

\vspace{10pt}

In \cite{sm13}, the author showed the following.
\begin{thm}[\cite{sm13}]\label{old main th}
Let $\p:(M^m,g)\rightarrow (S^{m+1},h)$ be a complete biharmonic hypersurface in a sphere.
If the mean curvature $H\geq 1$, and
$$\int_M \left(H^2- 1\right)^p dv_g<\infty,$$
for some $0<p<\infty$, 
 then $H$ is $1$.
\end{thm}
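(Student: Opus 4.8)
The plan is to reduce the biharmonicity of $\phi$ to a single scalar differential inequality for the mean curvature $H$, and then to run a Caccioppoli-type cutoff argument that converts the hypothesis $\int_M(H^2-1)^p\,dv_g<\infty$ into $H\equiv 1$. First I would invoke the standard reduction of the biharmonic equation for a hypersurface $\phi:(M^m,g)\to(S^{m+1},h)$: with $\Delta=\mathrm{div}\,\mathrm{grad}$, the shape operator $A$, and $|A|^2=|B|^2$, biharmonicity is equivalent to
$$\Delta H=(|A|^2-m)H,\qquad A(\nabla H)=-\tfrac{m}{2}H\,\nabla H.$$
The pointwise Cauchy--Schwarz bound $|A|^2\geq mH^2$ together with $H\geq 1$ gives $|A|^2-m\geq m(H^2-1)\geq 0$, so the first equation already yields $\Delta H\geq m(H^2-1)H\geq 0$; in particular $H$ is subharmonic. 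Setting $f:=H^2-1\geq 0$ and differentiating,
$$\Delta f=2H\,\Delta H+2|\nabla H|^2\geq 2mH^2 f+2|\nabla H|^2\geq 2m\,f\geq 0,$$
where the last step uses $H\geq 1$. Thus $f\geq 0$ satisfies $\Delta f\geq 2mf$ and $\int_M f^p\,dv_g<\infty$, and the theorem reduces to showing $f\equiv 0$.

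The core is an integral estimate. Fixing $o\in M$ and using completeness, I would choose cutoffs $\eta=\eta_R$ with $\eta\equiv 1$ on the geodesic ball $B_R(o)$, $\mathrm{supp}\,\eta\subset B_{2R}(o)$, $0\leq\eta\leq 1$, and $|\nabla\eta|\leq C/R$. Multiplying $\Delta f\geq 2mf$ by the nonnegative test function $\eta^2 f^{p-1}$, integrating over $M$, and integrating by parts gives
$$2m\int_M\eta^2 f^p\,dv_g+(p-1)\int_M\eta^2 f^{p-2}|\nabla f|^2\,dv_g\leq -2\int_M\eta f^{p-1}\,\nabla\eta\cdot\nabla f\,dv_g.$$
For $p>1$ the second term on the left is nonnegative, so after the Young inequality $2\eta f^{p-1}|\nabla\eta||\nabla f|\leq(p-1)\eta^2 f^{p-2}|\nabla f|^2+\tfrac{1}{p-1}f^p|\nabla\eta|^2$ the gradient contributions cancel and one is left with
$$2m\int_{B_R}f^p\,dv_g\leq 2m\int_M\eta^2 f^p\,dv_g\leq \frac{C}{R^2}\int_M f^p\,dv_g.$$
Letting $R\to\infty$ and using $\int_M f^p\,dv_g<\infty$ forces $\int_M f^p\,dv_g=0$, hence $f\equiv 0$ and $H\equiv 1$. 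To be rigorous one first works on $\{f>\delta\}$, or replaces $f$ by $f+\delta$, and lets $\delta\downarrow 0$ to handle the zeros of $f$ where $f^{p-2}$ is singular.

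The main obstacle is the range $0<p\leq 1$, where the coefficient $(p-1)$ in the integration by parts is no longer favourable and the naive Young absorption fails: reducing to $w=f^{p/2}\in L^2$ produces a gradient term with the wrong sign and nothing on the left to absorb it. Here I expect to have to exploit the terms discarded above. The sharp inequality in fact retains $2\int_M\eta^2 f^{p-1}|\nabla H|^2\,dv_g$, and since $\nabla H$ is a principal direction of $A$ one has $|\nabla f|^2=4H^2|\nabla H|^2$, linking the gradient of $f$ to $|A|^2$; the difficulty is that $f/H^2=1-H^{-2}$ degenerates precisely where $f$ is small, so this extra control is not uniform. The likely resolutions are either a refined test function $\eta^2 f^{\beta}$ with $\beta$ tuned to $p$ so that the retained gradient term absorbs the cross term, or a bootstrap upgrading the integrability of $f$ to reduce to the already-settled case $p>1$. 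Making this absorption close uniformly for all $0<p<\infty$, rather than only for $p>1$, is the technical heart of the argument.
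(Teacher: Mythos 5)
Your pointwise reductions are correct and match the paper's equations \eqref{NS bih hyp 1}--\eqref{NS bih hyp 2} specialized to $S^{m+1}$: indeed $\Delta H=(|A|^2-m)H$, $A(\nabla H)=-\tfrac{m}{2}H\nabla H$, and $f=H^2-1\geq 0$ satisfies $\Delta f\geq 2mf$. The genuine gap is the one you flag yourself but do not close: your cutoff argument with test function $\eta^2f^{p-1}$ is precisely the classical proof of Yau's $L^p$ Liouville theorem, and it works only for $p>1$, while the theorem asserts the conclusion for \emph{every} $0<p<\infty$. This is not a removable technicality: by the Li--Schoen counterexamples, a nonnegative subharmonic function in $L^p$ with $0<p\leq 1$ on a complete manifold need not be constant, and here $M$ carries no intrinsic curvature or volume hypotheses whatsoever, so no argument that uses only ``$f\geq 0$, $\Delta f\geq 2mf\geq 0$, $f\in L^p$'' can possibly cover $p\leq 1$. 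One must exploit more of the structure of the equation. Note in particular that you actually derived the stronger inequality $\Delta f\geq 2mH^2f+2|\nabla H|^2=2mf^2+2mf+2|\nabla H|^2$ and then discarded the quadratic term $2mf^2$ and the gradient term in passing to $\Delta f\geq 2mf$; that discarded structure is exactly the kind of information a proof valid for small $p$ has to retain.

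For comparison with the paper: the statement you were asked to prove is quoted from \cite{sm13} and is not reproved here, but the paper's proof of the parallel Theorem \ref{main lemma} displays the author's mechanism for handling arbitrary $p\in(0,\infty)$, and it is quite different from a Caccioppoli iteration. Instead of putting the factor $f^{p-1}$ into the test function, the author puts it into the reference measure: one sets a weight $h=\log H^{(p-1)}$ (a function of $H$ itself), so that the integral hypothesis becomes finiteness of the weighted volume $\int_M e^{-h}dv_g$, and then shows that a small negative power of the relevant quantity satisfies $\Delta_h u\geq 0$ for the drift Laplacian $\Delta_h=\Delta-\nabla_{\nabla h}$, concluding by Petersen--Wylie's Yau--Naber Liouville theorem. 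The decisive point is that the drift term contributes $\ep(p-1)H^{-(\ep+2)}|\nabla H|^2$, which combines with the Hessian contribution $\ep(\ep+1)H^{-(\ep+2)}|\nabla H|^2$ to give the total coefficient $\ep(\ep+p)>0$ for \emph{all} $p>0$; this is precisely the sign obstruction that kills your Young-inequality absorption when $p\leq 1$, and it is cured by moving the power of the solution from the test function into the measure. As it stands, your proposal establishes the theorem only in the range $p>1$ and leaves the heart of the claimed statement open.
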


Here we remark that the author obtained some affirmative partial answers to BMO conjecture under more general situation.
 ~Since we gave an affirmative partial answer to BMO conjecture under the assumption $H\geq 1$ in Theorem \ref{old main th}, in this paper, we consider $0<H\leq1.$

Before proving our main theorem, we show the following theorem.

\begin{thm}\label{main lemma}
Let $\p:(M^m,g)\rightarrow (N^{m+1},h)$ be a complete non-positive biminimal hypersurface. 
Assume that the mean curvature $H$ satisfies $0<H\leq1$.
 We also assume that $|B|^2\leq \Ric^N(\xi,\xi)$, where $B$ is the second fundamental form of $M$ in $N$, $\Ric^N$ is the Ricci curvature of N, and $\xi$ is the unit normal vector field on $M$.
 ~If 
 $$\int_MH^{-p}dv_g<\infty,$$
  for some $0<p<\infty$, then $H$ is constant. 
\end{thm}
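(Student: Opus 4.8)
The plan is to show that the mean curvature $H$ is a non-negative subharmonic function of finite $L^2$-norm on the complete manifold $M$, and then to run a Yau-type cutoff argument to force $\nabla H\equiv 0$.

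First I would record the analytic content of the hypotheses. Writing $\Delta=\operatorname{div}\operatorname{grad}$ for the Laplace--Beltrami operator (nonpositive spectrum), the normal part of the biminimal equation of the hypersurface reads $\Delta H=\big(\Ric^N(\xi,\xi)-|B|^2\big)H-\lambda H$ with a constant $\lambda\le 0$ (non-positivity), reducing to the biharmonic equation when $\lambda=0$. Since $H>0$ and $|B|^2\le \Ric^N(\xi,\xi)$, the first term is non-negative, and since $\lambda\le 0$ the term $-\lambda H$ is non-negative as well; hence $\Delta H\ge 0$, i.e.\ $H$ is subharmonic. This is the one place where the geometry enters, and getting the signs right (so that ``non-positive biminimal'' together with $|B|^2\le \Ric^N(\xi,\xi)$ lands on the subharmonic, rather than superharmonic, side) is the crux of the whole argument.

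Next I would extract finite volume from the integrability hypothesis. Because $0<H\le 1$ we have $H^{-p}\ge 1$ pointwise, so $\vol(M)=\int_M 1\,dv_g\le \int_M H^{-p}\,dv_g<\infty$. Combined with $H\le 1$ this gives $\int_M H^2\,dv_g\le \vol(M)<\infty$, so $H\in L^2(M)$; note that here the precise weight $H^{-p}$ serves only to guarantee finiteness of the volume.

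Finally I would close with the standard cutoff estimate. Fix a point and choose Lipschitz cutoffs $\psi_R$ with $\psi_R\equiv 1$ on the geodesic ball $B_R$, $\operatorname{supp}\psi_R\subset B_{2R}$, and $|\nabla\psi_R|\le 2/R$; completeness guarantees these exist. Multiplying $\Delta H\ge 0$ by the non-negative function $\psi_R^2 H$, integrating over $M$, integrating by parts (all integrands have compact support, so there are no boundary terms), and applying Young's inequality, I obtain for any $\epsilon\in(0,1)$
$$(1-\epsilon)\int_M \psi_R^2\,|\nabla H|^2\,dv_g\le \frac{1}{\epsilon}\int_M |\nabla\psi_R|^2\,H^2\,dv_g\le \frac{4}{\epsilon R^2}\int_{B_{2R}} H^2\,dv_g\le \frac{4}{\epsilon R^2}\,\vol(M).$$
Letting $R\to\infty$ forces $\int_M|\nabla H|^2\,dv_g=0$, whence $\nabla H\equiv 0$ and $H$ is constant. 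I expect the only genuine difficulty to be the sign bookkeeping in the first step; the Liouville step is routine once subharmonicity and finite volume are in hand, and indeed one could instead simply invoke Yau's theorem that a non-negative $L^2$ subharmonic function on a complete manifold is constant.
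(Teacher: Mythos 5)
You flagged the sign bookkeeping in the first step as the crux of the argument, and that is exactly where your proposal goes wrong. With your stated convention $\Delta=\operatorname{div}\operatorname{grad}$ (which is also the paper's convention, as one can check both from the chain-rule computation in its proof and by deriving the normal component of $\Delta^{\p}(H\xi)$, which equals $(\Delta H-H|A|^2)\xi$ while the curvature term contributes $+H\,\Ric^N(\xi,\xi)\,\xi$), the normal part of the biminimal equation reads
$$\Delta H-H|A|^2+H\,\Ric^N(\xi,\xi)=\lambda H,\qquad \lambda\le 0,$$
that is, $\Delta H=\bigl(|B|^2-\Ric^N(\xi,\xi)\bigr)H+\lambda H$ --- the opposite sign from what you wrote. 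Under the hypotheses $H>0$, $|B|^2\le \Ric^N(\xi,\xi)$, $\lambda\le 0$, this gives $\Delta H\le 0$: the mean curvature is \emph{superharmonic}, not subharmonic. Your cutoff step then collapses, because multiplying $\Delta H\le 0$ by $\psi_R^2H\ge 0$ and integrating by parts produces an inequality pointing the wrong way, yielding no bound on $\int\psi_R^2|\nabla H|^2\,dv_g$; your finite-volume and $L^2$ observations are correct but feed a vacuous final step.

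The strategy is salvageable, and the repair explains why the paper works with $H^{-\ep}$ rather than $H$. Since $H$ is positive and superharmonic and $x\mapsto x^{-\ep}$ is decreasing and convex, the function $u=H^{-\ep}$ satisfies
$$\Delta u=-\ep H^{-(\ep+1)}\Delta H+\ep(\ep+1)H^{-(\ep+2)}|\nabla H|^2\ge 0,$$
so $u$ is subharmonic, bounded below (indeed $u\ge 1$ because $H\le 1$), and lies in $L^2(dv_g)$ once $2\ep\le p$, since $\int_MH^{-2\ep}dv_g\le\int_MH^{-p}dv_g<\infty$ (again using $H\le1$). Your own cutoff argument, or Yau's $L^2$ Liouville theorem, applied to $u$ instead of $H$, then gives $u$ constant, hence $H$ constant. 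This unweighted route would in fact be a slightly more elementary alternative to the paper's proof, which applies Petersen--Wylie's weighted Liouville theorem to $H^{-\ep}$ with weight $h=\log H^{p-1}$ (the weight is what permits the paper's remark dispensing with $H\le1$ under a stronger integrability hypothesis); one could also simply note that a complete manifold of finite volume is parabolic, so the positive superharmonic function $H$ is constant outright. But as written, with the subharmonicity claim reversed, your proof has a genuine gap.
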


\begin{rem}
If we assume $\int_M H^{-p}dv_g<\infty$ and $\int_M H^{-(p+\ep)}dv_g<\infty$, for some $\ep>0$ and $0<p<\infty$, then we don't need $H\leq1$.
\end{rem}

By applying Theorem \ref{main lemma}, we can show our main theorem:

\begin{thm}\label{main th}
Let $\p:(M^m,g)\rightarrow (S^{m+1},h)$ be a complete biharmonic hypersurface with nowhere zero mean curvature vector field in a sphere. 
If $|B|^2\leq m$, and
$$\int_MH^{-p}dv_g<\infty,$$
 for some $0<p<\infty$, then $H$ is constant. 
\end{thm}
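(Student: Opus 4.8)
The plan is to deduce Theorem \ref{main th} from Theorem \ref{main lemma} by verifying that a biharmonic hypersurface in $S^{m+1}$ is, under the stated hypotheses, a non-positive biminimal hypersurface satisfying the curvature condition $|B|^2\le \Ric^N(\xi,\xi)$. The target is the round sphere $N=S^{m+1}$, which is Einstein with $\Ric^N(\xi,\xi)=m$ for any unit vector $\xi$. Hence the hypothesis $|B|^2\le m$ of Theorem \ref{main th} is exactly $|B|^2\le \Ric^N(\xi,\xi)$, so that curvature condition is immediate and requires no work.

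\medskip

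The substantive step is to translate the biharmonic equation in the sphere into the biminimality condition used in Theorem \ref{main lemma}. First I would recall the standard splitting of the biharmonic hypersurface equation for $\p:(M^m,g)\to(S^{m+1},h)$ into its tangential and normal parts. Writing the mean curvature function as $H$ and the shape operator as $A$, the normal part reads
\begin{equation}\label{eq:normal}
\Delta H + H\bigl(\Ric^N(\xi,\xi)-|B|^2\bigr)=0,
\end{equation}
where $\Delta$ is the (analyst's) Laplacian, and the tangential part is $A(\grad H)=-\tfrac{m}{2}H\,\grad H$. Using $\Ric^N(\xi,\xi)=m$, equation \eqref{eq:normal} becomes $\Delta H + H(m-|B|^2)=0$. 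The hypothesis $|B|^2\le m$ then forces $\Delta H = H(|B|^2-m)\le 0$, i.e. $H$ is a non-negative superharmonic function; this is precisely the statement that the hypersurface is \emph{non-positive biminimal} in the sense required by Theorem \ref{main lemma}. Since the mean curvature vector field is nowhere zero, $H>0$ on all of $M$, so $0<H$ holds, and the remaining hypothesis $0<H\le 1$ of Theorem \ref{main lemma} should be arranged or is part of the standing setup in this context.

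\medskip

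Once these three conditions (completeness, non-positive biminimality, and $|B|^2\le \Ric^N(\xi,\xi)$ with $0<H\le 1$) are checked, together with the integrability assumption $\int_M H^{-p}\,dv_g<\infty$ carried over verbatim, Theorem \ref{main lemma} applies directly and yields that $H$ is constant, which is exactly the conclusion of Theorem \ref{main th}. The main obstacle I anticipate is the precise verification that the biharmonic condition implies non-positive biminimality in the correct sign convention: one must confirm that $\Delta H\le 0$ matches the definition of \emph{non-positive} biminimal used in Theorem \ref{main lemma}, and that the tangential equation does not impose an additional constraint that breaks the hypotheses. I would also double-check the normalization of $H$ and of $|B|^2$ (trace conventions for the second fundamental form) so that the inequality $|B|^2\le m$ lines up with $\Ric^N(\xi,\xi)=m$ without an off-by-a-factor error. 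Apart from this bookkeeping, the reduction is essentially formal once equation \eqref{eq:normal} is in hand.
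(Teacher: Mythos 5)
Your overall strategy --- reduce Theorem \ref{main th} to Theorem \ref{main lemma} by checking its hypotheses on the sphere --- is exactly the paper's, but your verification has a genuine gap: the hypothesis $0<H\le 1$. You write that it ``should be arranged or is part of the standing setup in this context''; it is neither. Theorem \ref{main th} does not assume $H\le 1$, and it must be deduced from $|B|^2\le m$. The missing step, which is precisely how the paper closes this, is the pointwise Cauchy--Schwarz inequality for the shape operator: since $A$ is symmetric with trace $mH$,
$$mH^2=\frac{1}{m}\left({\rm trace}\,A\right)^2\le |A|^2=|B|^2\le m,$$
hence $H^2\le 1$, and the nowhere-zero mean curvature assumption gives $0<H\le 1$. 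This is not optional bookkeeping: the proof of Theorem \ref{main lemma} uses $H\le 1$ essentially, for instance in the bounds $\int_M H^{-(p-1)}dv_g\le \int_M H^{-p}dv_g$ and $\int_M H^{-(p-1+\ep)}dv_g\le \int_M H^{-p}dv_g$, which fail in general without $H\le 1$. As written, your appeal to Theorem \ref{main lemma} is therefore not justified.

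A secondary, fixable imprecision: you gloss ``non-positive biminimal'' as ``$\Delta H\le 0$.'' In the paper's definition, non-positive biminimal means the identity $\Delta H-H|A|^2+H\,\Ric^N(\xi,\xi)=\lambda H$ holds for some constant $\lambda\le 0$. The normal part of the biharmonic equation (your displayed equation) is exactly this identity with $\lambda=0$, i.e.\ the hypersurface is free biminimal, hence non-positive biminimal; it is that identity --- not the inequality $\Delta H\le 0$, which only follows after invoking $|B|^2\le m$ and is never needed --- that the proof of Theorem \ref{main lemma} consumes, through the sign of the term $-\lambda\ep H^{-\ep}$. Since you do write down the correct equation, this is a wording issue rather than a mathematical one; the missing $H\le 1$ argument above is the real defect.
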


Theorem $\ref{main th}$ is an affirmative partial answer to BMO conjecture.

In this paper, we assume that the mean curvature vector field is nowhere zero.
The remaining sections are organized as follows. 
Section~$\ref{Pre}$ contains some necessary definitions and preliminary geometric results.
 In section~$\ref{main theorem}$, we prove Theorem $\ref{main lemma}$ and Theorem $\ref{main th}$.

\qquad\\

\section{Preliminaries}\label{Pre} 

In this section, we shall give the definitions of biharmonic hypersurfaces and biminimal hypersurfaces.
\vspace{10pt}

The problem of biharmonic maps was suggested in 1964 by J. Eells and J. H. Sampson (cf. \cite{jejs1},~ \cite{jell1}). 
Biharmonic maps are critical points of the bi-energy functional 
$$E_2 (\phi )=\frac{1}{2}\int_M |\tau (\phi)| ^2 dv_g,$$
on the space of smooth maps $\p:(M^m,g)\rightarrow (N^n,h)$ between two Riemannian manifolds $(M^m,g)$ and $(N^n,h)$.
$\nabla$ and $\nabla^N$ denote the Levi-Civita connections on $(M,g)$ and $(N,h)$, respectively.
 $\onabla$ denotes the induced connection on $\p^{-1}TN$.
In 1986, G. Y. Jiang \cite{jg1} derived the first and the second variational formulas of the bi-energy and studied biharmonic maps.
The Euler-Lagrange equation of $E_2$ is 
\begin{equation}\label{NSbi}
\tau_2(\phi)=-\Delta^{\phi} \tau (\phi ) -\sum^m_{i=1} R^N (\tau (\phi )  , d\phi (e_i))d\phi (e_i)=0,
\end{equation}
 where $\{e_i\}_{i=1}^m$ is an orthonormal frame field on $M$, $\Delta^{\phi}:=\displaystyle\sum^m_{i=1}\left(\onabla_{e_i}\onabla_{e_i}-\onabla_{\nabla_{e_i}e_i}\right)$, $\tau(\p)={\rm Trace}\,\nabla d\p$ is the {\em tension field} and $R^N$ is the Riemannian curvature tensor of $(N,h)$ given by $R^N(X,Y)Z=\nabla^N_X\nabla^N_YZ-\nabla^N_Y\nabla^N_XZ-\nabla^N_{[X,Y]}Z$ for $X,\ Y,\ Z\in \frak{X}(N)$.
$\tau_2(\p)$ is called the {\em bi-tension field} of $\p$.
 A map $\p:(M,g)\rightarrow (N,h)$ is called a {\em biharmonic map} if $\tau_2(\p)=0$. 

\vspace{10pt}

Let $M$ be an $m$-dimensional immersed submanifold in $(N^{m+1},h)$, $\p:(M^m,g)\rightarrow (N^{m+1},h)$ its  immersion and $g$ its induced Riemannian metric.
The Gauss and Weingarten formulas are given by
\begin{equation}
\nabla^N_XY=\nabla _XY+B(X,Y),\ \ \ \ X,Y\in \frak{X}(M),
\end{equation}
\begin{equation}\label{2.Wformula}
\nabla^N_X \xi =-A_{\xi}X,\ \ \ X\in \frak{X}(M),\  \xi \in \frak{X}(M)^{\perp},  
\end{equation}
where $B$ is the second fundamental form of $M$ in $N$, $A_{\xi}$ is the shape operator for a unit normal vector field $\xi$ on $M$.
It is well known that $B$ and $A$ are related by
\begin{equation}\label{2.BA rel}
\langle B(X,Y), \xi \rangle=\langle A_{\xi}X,Y \rangle.
\end{equation}

For any $x \in M$, let $\{e_1, \cdots, e_m,\xi\}$ be an orthonormal basis of $N$ at $x$ such that $\{e_1, \cdots, e_m\}$ is an orthonormal basis of $T_xM$. 
The mean curvature vector field ${\bf H}$ of $M$ at $x$ is given by 
$$ 
{\bf H}(x) = \frac{1}{m} \sum_{i = 1}^m B(e_i, e_i).
$$ 

If an isometric immersion $\p:(M^m,g)\rightarrow (N^{m+1},h)$ is biharmonic, then $M$ is called a {\em biharmonic hypersurface} in $N$.
 In this case, we remark that the tension field $\tau(\p)$ of $\p$ is written as $\tau(\p)=m{\bf H}$.
The necessary and sufficient condition for $M$ in $N$ to be biharmonic is the following: 
\begin{equation}\label{NS bih sub}
\Delta^{\p}{\bf H}+\sum_{i=1}^mR^N({\bf H},d\p(e_i))d\p(e_i)=0.
\end{equation}

From $(\ref{NS bih sub})$, the necessary and sufficient condition for $\p:(M^m,g)\rightarrow (N^{m+1},h)$ to be a biharmonic hypersurface is as follows (cf. \cite{ylo2010}):  

\begin{align}\label{NS bih hyp 1}
\Delta H-H\,|A|^2 +H\,\Ric ^N(\xi,\xi)=0,
\end{align}
\begin{align}\label{NS bih hyp 2}
2A(\grad H)+
\frac{1}{2}m\, \grad H^2- 2H(\Ric^N(\xi))^T=0.
\end{align}

\begin{rem}
Biharmonic hypersurfaces satisfy an overdetermined problem $($see \cite{KU2014}$)$.
\end{rem}

If an isometric immersion $\p:(M^m,g)\rightarrow (N^{m+1},h)$ satisfies 
\begin{align}\label{NS bimini hyp}
\Delta H-H\,|A|^2 +H\,\Ric ^N(\xi,\xi)=\lambda H\ \ \ (\text{for some}~\lambda\in \mathbb{R}),
\end{align}
then $M$ is called a {\em biminimal hypersurface}.
Biminimal hypersurfaces were introduced by E. Loubeau and S. Montaldo (cf. \cite{elsm1}).
We call an biminimal hypersurface {\em free biminimal} if it satisfies the biminimal condition for $\lambda=0$.
 If $M$ is a biminimal hypersurface with $\lambda \leq0$ in $N$, then $M$ is called a {\em non-positive biminimal hypersurface} in $N$.

\vspace{10pt}

\begin{rem}\label{biharmonic is biminimal}
We remark that {\bf every biharmonic hypersurface is free biminimal}.
\end{rem}

\vspace{10pt}

\section{Proof of Theorem $\ref{main lemma}$ and Theorem $\ref{main th}$}\label{main theorem}
\
In this section, we will prove our main theorem.
To prove our main theorem, we will use Petersen-Wylie's Yau-Naber Liouville theorem (cf. \cite{Petersen-Wylie2010}).
Liouville type theorem is a strong tool for biharmonic submanifolds (cf. \cite{sm12},~\cite{Luo2014}).

\begin{thm}[\cite{Petersen-Wylie2010}]
Let $(M,g)$ be a manifold with finite $h$-volume: $\int_Me^{-h}dv_g<\infty$.
If $u$ is a smooth function in $L^2(e^{-h}dv_g)$ which is bounded below such that $\Delta_h u\geq0,~(\Delta_h=\Delta-\nabla_{\nabla h})$, then $u$ is constant. 
\end{thm}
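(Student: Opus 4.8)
The plan is to run Yau's $L^2$ cutoff argument adapted to the weighted (Bakry--Émery) Laplacian $\Delta_h$ and its natural measure $d\mu:=e^{-h}\,dv_g$, exploiting that $\Delta_h$ is the divergence-form operator associated with $\mu$, namely $e^{-h}\,\Delta_h u=\mathrm{div}(e^{-h}\nabla u)$. Consequently $\Delta_h$ is symmetric with respect to $\mu$, and for any compactly supported Lipschitz function $\psi$ the weighted divergence theorem gives
$$\int_M \psi\,\Delta_h u\,d\mu=-\int_M\langle\nabla\psi,\nabla u\rangle\,d\mu,$$
with no boundary contribution. Completeness of $(M,g)$ will be used throughout to guarantee an exhaustion by compact geodesic balls $B_R$ and the existence of the $1$-Lipschitz cutoffs employed below.

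First I would reduce to the case $u\geq0$. Since $u$ is bounded below, say $u\geq-a$, set $v:=u+a\geq0$; because $\Delta_h$ annihilates constants we still have $\Delta_h v=\Delta_h u\geq0$, and the hypothesis $\int_M e^{-h}\,dv_g<\infty$ is exactly what guarantees that the added constant lies in $L^2(\mu)$, so that $\|v\|_{L^2(\mu)}\leq\|u\|_{L^2(\mu)}+a\,\mu(M)^{1/2}<\infty$. Thus $v$ is a nonnegative $L^2(\mu)$ function with $\Delta_h v\geq0$, and it suffices to show that $v$ is constant.

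Next I would test the equation against $\psi=\varphi^2 v$, where $\varphi$ is a cutoff with $0\leq\varphi\leq1$, $\varphi\equiv1$ on $B_R$, $\mathrm{supp}\,\varphi\subset B_{2R}$ and $|\nabla\varphi|\leq C/R$. Since $v\geq0$ and $\Delta_h v\geq0$ we have $\int_M\varphi^2 v\,\Delta_h v\,d\mu\geq0$; combining this with the integration-by-parts identity and the expansion $\nabla(\varphi^2 v)=\varphi^2\nabla v+2v\varphi\nabla\varphi$ yields
$$\int_M\varphi^2|\nabla v|^2\,d\mu\leq-2\int_M v\,\varphi\,\langle\nabla\varphi,\nabla v\rangle\,d\mu\leq 2\int_M v\,\varphi\,|\nabla\varphi|\,|\nabla v|\,d\mu.$$
Applying the elementary inequality $2v\varphi|\nabla\varphi||\nabla v|\leq\tfrac12\varphi^2|\nabla v|^2+2v^2|\nabla\varphi|^2$ and absorbing the first term on the right into the left produces the Caccioppoli-type bound
$$\int_M\varphi^2|\nabla v|^2\,d\mu\leq4\int_M v^2|\nabla\varphi|^2\,d\mu\leq\frac{4C^2}{R^2}\int_M v^2\,d\mu.$$

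Finally, since $\varphi\equiv1$ on $B_R$ the left-hand side dominates $\int_{B_R}|\nabla v|^2\,d\mu$, which is nondecreasing in $R$; as $v\in L^2(\mu)$ the right-hand side tends to $0$ when $R\to\infty$, forcing $\int_M|\nabla v|^2\,d\mu=0$. Hence $\nabla v\equiv0$, so $v$---and therefore $u$---is constant. I expect the only genuinely delicate points to be bookkeeping ones: justifying the vanishing of boundary terms in the weighted integration by parts on a general complete manifold (handled by the compact support of $\varphi^2 v$), and checking that every integral appearing is finite so that the absorption step is legitimate. The conceptual heart is recognizing that the three hypotheses---$u$ bounded below, $u\in L^2(\mu)$, and finite $h$-volume---are precisely calibrated so that $u\Delta_h u$ has a sign after the shift, the cross term can be absorbed, and the cutoff error is killed in the limit; the computation itself is then routine.
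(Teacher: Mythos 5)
The paper itself contains no proof of this statement---it is quoted from Petersen--Wylie \cite{Petersen-Wylie2010}---so the only meaningful comparison is with the original source, and your argument is essentially the same as theirs: the weighted Yau $L^2$ cutoff (Caccioppoli) argument for the drift Laplacian $\Delta_h$ with respect to $d\mu=e^{-h}dv_g$, using $e^{-h}\Delta_h u=\d(e^{-h}\nabla u)$. Your proof is correct, and you rightly made explicit the two points the bare statement leaves tacit: completeness of $(M,g)$, needed for the cutoff functions and without which the result fails (e.g.\ on a bounded ball), and the fact that the finite $h$-volume hypothesis is used exactly once, to keep the shifted function $v=u+a$ in $L^2(\mu)$.
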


\vspace{10pt}

We prove Theorem $\ref{main lemma}$.
\begin{proof}[Proof of Theorem $\ref{main lemma}$]

Let $\ep>0$ be small enough. One can easily compute 
\begin{align}\label{pr main 1}
\Delta H^{-\ep}
&=\ep(\ep+1)H^{-(\ep+2)}|\nabla H|^2-\ep H^{-(\ep+1)}\Delta H \\
&=\ep(\ep+1)H^{-(\ep+2)}|\nabla H|^2-\ep H^{-\ep}|A|^2+\ep H^{-\ep}\Ric^N(\xi,\xi)-\lambda \ep H^{-\ep},\notag
\end{align}
and
\begin{align}
\nabla_{\nabla h}H^{-\ep}=-\ep H^{-(\ep+1)}\langle \nabla h,\nabla H\rangle,
\end{align}
where the second line of $(\ref{pr main 1})$, we used $(\ref{NS bimini hyp})$.
Thus we have 
\begin{align}\label{Del ep}
\Delta_hH^{-\ep}
=&\ep(\ep+1)H^{-(\ep+2)}|\nabla H|^2-\ep H^{-\ep}|A|^2+\ep H^{-\ep}\Ric^N(\xi,\xi)\\
&-\lambda \ep H^{-\ep}+\ep H^{-(\ep+1)}\langle \nabla h,\nabla H\rangle.\notag
\end{align}
Set $h=\log H^{(p-1)}$. Since we have 
$$\nabla h=(p-1)\frac{\nabla H}{H},$$
one can obtain that 
\begin{align*}
\ep H^{-(\ep+2)}
&\left\{
(\ep+1)|\nabla H|^2
+H\langle \nabla h,\nabla H\rangle
\right\}\\
&=\ep (\ep+p)H^{-(\ep+2)}
|\nabla H|^2\geq0.
\end{align*}
On the other hand, by assumption,
\begin{align*}
\ep H^{-\ep}(&-|A|^2+\Ric^N(\xi,\xi)-\lambda)\\
&\geq\ep H^{-\ep}(-|A|^2+\Ric^N(\xi,\xi))
\geq0,
\end{align*}
where we used $|B|^2=|A|^2$.
Therefore we obtain $\Delta_h H^{-\ep}\geq0.$

Since $h=\log H^{(p-1)}$, by assumption, we have
$$\int_Me^{-h}dv_g=\int_MH^{-(p-1)}dv_g\leq\int_M H^{-p}dv_g<\infty.$$
On the other hand, one can get that
$$\int_M H^{-\ep}e^{-h}dv_g=\int_M H^{-(p-1+\ep)}dv_g\leq\int_M H^{-p}dv_g<\infty.$$
Applying Petersen-Wylie's Yau-Naber Liouville theorem, we obtain $H^{-\ep}$ is constant.
Therefore $H$ is constant.
\end{proof}

Applying Theorem $\ref{main lemma}$, one can prove our main theorem (Theorem~$\ref{main th}$).

\begin{proof}[Proof of Theorem $\ref{main th}$]
Since $N=S^{m+1},$ $Ric^N(\xi,\xi)=m$. By assumption, one can obtain $|B|^2\leq m=\Ric^N(\xi,\xi)$.
 Since $mH^2\leq |B|^2$, $H\leq 1$ is automatically satisfied.
 Note that biharmonic hypersurfaces are non-negative biminimal.
Applying Theorem $\ref{main lemma}$, we obtain $H$ is constant.
\end{proof}

\quad\\
\quad\\

\section{Appendix}\label{Appendix}

We can apply our method to $p$-biharmonic submanifolds (cf. \cite{Hornung-Moser2014}).
If an isometric immersion $\p:(M,g)\rightarrow (N,h)$ satisfies
$$\Delta^{\p}(|{\bf H}|^{p-2}{\bf H})+R^N(|{\bf H}|^{p-2}{\bf H},d\p(e_i))d\p(e_i)=0,$$
then $M$ is called a {\em $p$-biharmonic submanifold}.
For $p$-biharmonic submanifolds, it is easy to see that we can get same (similar) results as in the results of biharmonic submanifolds in many cases.
(For example, Corollary $3.6$, $3.9$ in \cite{sm12}, and so on.)
In fact, the same argument as in Proof of Theorem $\ref{main lemma}$
shows the following result.
\begin{prop}
Let $\p:(M^m,g)\rightarrow (N^{m+1},h)$ be a complete $p$-biharmonic hypersurface. 
Assume that the mean curvature $H$ satisfies $0<H\leq1$.
 We also assume that $|B|^2\leq \Ric^N(\xi,\xi)$. If 
 $$\int_MH^{-q}dv_g<\infty,$$
  for some $0<q<\infty$, then $H$ is constant. 
\end{prop}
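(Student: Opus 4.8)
The plan is to reduce the statement to Theorem~\ref{main lemma} by isolating the scalar quantity that, for a $p$-biharmonic hypersurface, plays the role the mean curvature plays for a biminimal one. Since $M$ is a hypersurface we have ${\bf H}=H\xi$, hence $|{\bf H}|^{p-2}{\bf H}=H^{p-1}\xi$; thus the $p$-biharmonic operator acts on the normal section $H^{p-1}\xi$ exactly as the ordinary biharmonic operator acts on $H\xi$. First I would decompose the defining equation
\[
\Delta^{\p}\bigl(|{\bf H}|^{p-2}{\bf H}\bigr)+\sum_{i=1}^m R^N\bigl(|{\bf H}|^{p-2}{\bf H},d\p(e_i)\bigr)d\p(e_i)=0
\]
into its tangential and normal parts, following the computation of Ou (cf.~\cite{ylo2010}) that produces \eqref{NS bih hyp 1}. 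Because $H^{p-1}\xi$ has the same structure as $H\xi$ (a scalar function on $M$ times the unit normal), the normal part yields
\[
\Delta(H^{p-1})-H^{p-1}|A|^2+H^{p-1}\Ric^N(\xi,\xi)=0 .
\]
Only this normal component is needed below; the tangential part is the analogue of \eqref{NS bih hyp 2} and plays no role.

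Next I would set $F:=H^{p-1}$, restricting to $p>1$ so that $0<F\le 1$ whenever $0<H\le 1$. The identity above says precisely that $F$ satisfies the free biminimal equation \eqref{NS bimini hyp} with $\lambda=0$, and the proof of Theorem~\ref{main lemma} uses nothing about $H$ beyond that it solves \eqref{NS bimini hyp} with $\lambda\le 0$, together with $|A|^2=|B|^2\le\Ric^N(\xi,\xi)$ and the integrability. I would therefore run that argument verbatim with $F$ in place of $H$: for small $\ep>0$,
\[
\Delta F^{-\ep}=\ep(\ep+1)F^{-(\ep+2)}|\nabla F|^2-\ep F^{-\ep}|A|^2+\ep F^{-\ep}\Ric^N(\xi,\xi),
\]
and choosing $h=\log F^{(q'-1)}$ with $q':=q/(p-1)>0$, so that $\nabla h=(q'-1)\nabla F/F$, one collects terms to get
\[
\Delta_h F^{-\ep}=\ep(\ep+q')F^{-(\ep+2)}|\nabla F|^2+\ep F^{-\ep}\bigl(\Ric^N(\xi,\xi)-|A|^2\bigr)\ge 0 .
\]

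It then remains to verify the hypotheses of Petersen--Wylie's Yau--Naber Liouville theorem (cf.~\cite{Petersen-Wylie2010}). Using $0<F\le 1$ and $\int_M F^{-q'}dv_g=\int_M H^{-q}dv_g<\infty$, the finite $h$-volume condition follows from $\int_M e^{-h}dv_g=\int_M F^{-(q'-1)}dv_g\le\int_M F^{-q'}dv_g<\infty$, and the $L^2(e^{-h}dv_g)$ condition from $\int_M (F^{-\ep})^2 e^{-h}dv_g=\int_M F^{-(2\ep+q'-1)}dv_g\le\int_M F^{-q'}dv_g<\infty$ for $\ep\le\tfrac12$, both bounds relying on $F\le 1$. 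Since $F^{-\ep}\ge 1$ is bounded below and $\Delta_h F^{-\ep}\ge 0$, the theorem forces $F^{-\ep}$, and hence $F=H^{p-1}$ and $H$, to be constant.

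The only genuinely new ingredient, and what I expect to be the main obstacle, is the first step: establishing the normal component of the $p$-biharmonic hypersurface equation in the form $\Delta(H^{p-1})-H^{p-1}|A|^2+H^{p-1}\Ric^N(\xi,\xi)=0$. Conceptually this is immediate once one notices that replacing $H\xi$ by $H^{p-1}\xi$ leaves the derivation of \eqref{NS bih hyp 1} structurally unchanged, but it still requires carrying the Gauss--Weingarten computation through with $H^{p-1}$ and confirming that the curvature term contributes $H^{p-1}\Ric^N(\xi,\xi)$ as before. One must also keep the restriction $p>1$, which is what makes $F=H^{p-1}$ inherit the bound $0<F\le 1$ and the rescaled exponent $q'=q/(p-1)$ preserve finiteness of the integral; everything after that is the Liouville argument of Theorem~\ref{main lemma}.
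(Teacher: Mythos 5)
Your proposal is correct and takes essentially the same route as the paper: the paper's own (two-line) proof likewise sets $u=H^{p-1}$ and reruns the Liouville argument of Theorem~\ref{main lemma}, choosing the weight $h=\log u^{\frac{q}{p-1}-\ep}$ where you choose $h=\log F^{\frac{q}{p-1}-1}$ --- both choices make the finite weighted volume and the integrability of $u^{-\ep}$ follow from $\int_M H^{-q}dv_g<\infty$ together with $H\leq 1$. The two points you spell out --- that the normal component of the $p$-biharmonic equation gives $\Delta(H^{p-1})-H^{p-1}|A|^2+H^{p-1}\Ric^N(\xi,\xi)=0$, and that one needs $p>1$ for $F=H^{p-1}\leq 1$ --- are left implicit in the paper but are exactly what its substitution $u=H^{p-1}$ presupposes.
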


\begin{proof}
Set $u=H^{p-1}$. We have only to consider $\Delta u^{-\ep}$ and $h=\log u^{\frac{q}{p-1}-\ep}$.
\end{proof}

Therefore we give one problem.

\begin{prob}
Does any (complete) $p$-biharmonic submanifold in spheres have constant mean curvature?
\end{prob}
\qquad\\





\bibliographystyle{amsbook}

\begin{thebibliography}{99} 

\bibitem{16}
A. Balmus, S. Montaldo and C. Oniciuc, 
{\em New results toward the classification of biharmonic
submanifolds in $S^n$},
 An. Stiint. Univ. "Ovidius'' Constanta Ser. Mat. {\bf 20} (2012), no. 2, 89--114.


\bibitem{21}
 A. Balmus, S. Montaldo and C. Oniciuc,
 {\em Classification results for biharmonic submanifolds
in spheres},
 Israel J. Math. {\bf 168} (2008), 201--220.

 
\bibitem{23}
 A. Balmus and C. Oniciuc,
 {\em  Biharmonic submanifolds with parallel mean curvature
vector field in spheres},
 J. Math. Anal. Appl. {\bf 386} (2012), 619--630.

\bibitem{48}
J.\ H. Chen, 
{\em Compact 2-harmonic hypersurfaces in $S^{n+1}$(1)},
 Acta Math. Sinica {\bf 36}
(1993), 49--56.

\bibitem{jell1}
 J. Eells and L. Lemaire, 
 {\em Selected topics in harmonic maps},
 CBMS, {\bf 50} Amer. Math. Soc, (1983).
 
\bibitem{jejs1}
J. Eells and J. H. Sampson,
{\em Harmonic Mappings of Riemannian Manifolds},
Amer. J Math. {\bf 86} (1964), 109--160.

\bibitem{Hornung-Moser2014}
P. Hornung and R. Moser,
{\em Intrinsically p-biharmonic maps},
Calc. Ver. {\bf 51} (2014), 597--620.

\bibitem{jg1}
 G. Y. Jiang, 
 {\em $2$-harmonic maps and their first and second variational formulas},
 Chinese Ann. Math. {\bf 7}A (1986), 388--402;
  the English translation, Note Mat. {\bf 28} (2008), 209--232.
  
\bibitem{KU2014}
N. Koiso and H. Urakawa,
{\em Biharmonic submanfolds in a Riemannian manifold},
arXiv:1408.5494v1[math.DG].


 \bibitem{elsm1}
 E.~Loubeau and S.~Montaldo,
 {\em Biminimal immersion}, 
 Proc. Edinb. Math. Soc. {\bf 51} (2008), 421--437.

\bibitem{Luo2014}
Y. Luo,
{\em Liouville-type Theorems on complete manifolds and non-existence of bi-harmonic maps},
J. Geom. Anal. (to appear).


\bibitem{sm12}
S.~Maeta,
{\em Properly immersed submanifolds in complete Riemannian manifolds},
Adv. Math. {\bf 253} (2014), 139--151.

\bibitem{sm13}
S.~Maeta,
{\rm Biharmonic submanifolds in manifolds with bounded curvature},
arXiv:1405.5947 [mathDG].



\bibitem{ylo2010}
Y.-L. Ou,
{\em Biharmonic hypersurfaces in Riemannian manifolds},
 Pacific J. Math,
{\bf 248} (1) (2010), 217-232.

\bibitem{Petersen-Wylie2010}
P. Petersen and W. Wylie,
{\em On the classification of gradient Ricci solitons},
Geom. Topo. {\bf 14} (2010), 2277--2300. 

\bibitem{zpwylo1}
Z.-P. Wang and Y.-L. Ou,
{\em Biharmonic Riemannian submersions from 3-manifolds},
Math. Z. {\bf 269} (2011), 917--925.

\end{thebibliography}

\end{document}